\documentclass[11pt]{article}

\usepackage[T1]{fontenc}
\usepackage{lmodern}
\usepackage{amsmath,amssymb,amsthm,mathtools}
\usepackage[hidelinks]{hyperref}

\newtheoremstyle{harduxdefinition}
{8pt plus 2pt minus 1pt}{8pt plus 2pt minus 1pt}
{\normalfont}{}{\bfseries}{.}{0.5em}{}
\newtheoremstyle{harduxplain}
{8pt plus 2pt minus 1pt}{8pt plus 2pt minus 1pt}
{\itshape}{}{\bfseries}{.}{0.5em}{}
\newtheoremstyle{harduxremark}
{8pt plus 2pt minus 1pt}{8pt plus 2pt minus 1pt}
{\normalfont}{}{\itshape}{.}{0.5em}{}
\newtheorem{definition}{Definition}[section]
\usepackage{amsthm}
\newtheorem{lemma}{Lemma}[section]
\newtheorem{proposition}{Proposition}[section]
\newtheorem{theorem}{Theorem}[section]
\newtheorem{corollary}{Corollary}[section]
\theoremstyle{harduxremark}
\newtheorem{remark}{Remark}[section]
\newtheorem{example}{Example}[section]

\makeatletter
\def\bstctlcite{\@ifnextchar[{\@bstctlcite}{\@bstctlcite[@auxout]}}
\def\@bstctlcite[#1]#2{\@bsphack
  \@for\@citeb:=#2\do{%
    \edef\@citeb{\expandafter\@firstofone\@citeb}%
    \if@filesw
      \immediate\write\csname #1\endcsname{\string\citation{\@citeb}}%
    \fi}%
  \@esphack}
\makeatother

\DeclareMathOperator{\gra}{gra}

\title{Sharp Lower Bounds on the Haraux Function Beyond Reflexivity}
\author{Weifeng Yang\\
\texttt{ywf841673182@gmail.com}
}
\date{}

\begin{document}
\bstctlcite{IEEEexample:BSTcontrol}

\maketitle

\begin{abstract}
We prove that the sharp $\frac{1}{2}$ lower bound for the Haraux function holds for every maximally monotone operator of type~(NI) on an arbitrary real Banach space. This extends the result established in reflexive Banach spaces to arbitrary real Banach spaces. We also establish an exact decomposition at each graph point, showing that the local contribution to the Haraux function and a nonnegative residual sum to one half of the weighted squared displacement.  
Due to the equivalence between type~(NI) and quasidensity, this decomposition also yields the sharp bound without requiring a graph point at which the residual vanishes. 
Moreover, for every operator with a nonempty graph, this decomposition yields a lower bound involving the residual infimum, and for maximally monotone operators it further yields a new characterization of type~(NI) in terms of the Haraux function. 
Finally, on $c_0$, we give a maximally monotone operator of type~(NI) for which the residual infimum is zero at some target but is not attained. This shows that the existence of a graph point at which the residual vanishes is strictly stronger than the vanishing of the residual infimum required in our proof.

\end{abstract}

\noindent\textbf{Keywords:} Haraux function; maximally monotone operator; type~(NI); graph distance; duality mapping.

\medskip
\noindent\textbf{2020 Mathematics Subject Classification:} 47H05; 47N10, 46A20.

\section{Introduction}
\label{sec:introduction}

Let $X$ be a real Banach space and let $A:X\rightrightarrows X^*$ be an operator. The operator is monotone when $\langle x-y,x^*-y^*\rangle\geq0$ for every $(x,x^*),(y,y^*)\in\gra A$, and it is maximally monotone when no proper extension of its graph preserves this property. For a maximally monotone operator, membership in the graph can be tested by the Haraux function as follows. 
\begin{equation}
H_A(x,u^*)
=
\sup_{(y,y^*)\in\gra A}
\langle x-y,y^*-u^*\rangle.
\label{eq:haraux-intro}
\end{equation}
This function arose in monotone range theory and is central to the Br\'ezis-Haraux theorem \cite{brezisharaux1976}. Its role as a graph test follows from its relation to the Fitzpatrick function. In fact, if $F_A$ denotes the Fitzpatrick function \cite{fitzpatrick1988}, then
\begin{equation}
F_A(x,u^*)=H_A(x,u^*)+\langle x,u^*\rangle.
\label{eq:haraux-fitzpatrick}
\end{equation}
Since the Fitzpatrick representation gives $H_A\geq0$ with equality exactly on $\gra A$, $H_A$ provides a qualitative test for graph membership, and this test is used in many applications, such as nonautonomous monotone evolution equations \cite{attouchcabotczarnecki2018,ibaraki2024} and composite monotone inclusions \cite{combettesmayrand2026,penotratsimahalo2001}. Through Eq. (\ref{eq:haraux-fitzpatrick}), a graph distance lower bound for $H_A$ is also a quantitative estimate for the Fitzpatrick gap. This places the present problem within the development of quantitative Fenchel-Young and Fitzpatrick-type inequalities. Such inequalities have recently appeared in inverse problems over measures \cite{andradepeyrepoon2026} and learning losses \cite{rakotomandimby2024}. Additionally, Bauschke, McLaren, and Sendov refined the Fenchel-Young inequality through Fitzpatrick functions for subdifferentials \cite{bauschkemclarensendov2006}. Voisei and Z\u{a}linescu established Banach space estimates for strongly representable operators \cite{voiseizalinescu2009}. In Hilbert spaces, Carlier proved a quantitative Fenchel-Young inequality with applications to Fitzpatrick functions \cite{carlier2023}, and Bauschke, Singh, and Wang subsequently refined it \cite{bauschkesinghwang2023}. Burachik and Mart\'inez-Legaz extended this line of results to reflexive Banach spaces \cite{burachikmartinezlegaz2025}.  

Recently, Combettes and Mayrand \cite{combettesmayrand2026} proved that the Haraux function of a maximally monotone operator satisfies the sharp estimate on the reflexive real Banach spaces as follows.
\begin{equation}
H_A(x,u^*)
\geq
\frac{1}{2} d_{\gra A,\gamma}^2(x,u^*),
\qquad \gamma>0,
\label{eq:sharp-intro}
\end{equation}
where $d_{\gra A,\gamma}$ is the distance generated by the weighted product norm. On the other hand, in an arbitrary real Banach space, the result of Voisei and Z\u{a}linescu \cite{voiseizalinescu2009} gives the weaker estimate for a maximally monotone operator of type~(NI) 
\begin{equation}
H_A(x,u^*)
\geq
\frac{1}{4} d_{\gra A,1}^2(x,u^*).
\label{eq:quarter-intro}
\end{equation}

However, these two estimates do not reveal whether the gap constant from $\frac{1}{2}$ to $\frac{1}{4}$ is caused by nonreflexive geometry or by the proof mechanism used to obtain the sharper bound. Moreover, the proof of Eq. (\ref{eq:sharp-intro}) relies on a graph point satisfying an exact metric resolvent condition, but reflexivity only guarantees the existence of this point, whereas the same condition may have no graph solution in a nonreflexive space \cite{combettesmayrand2026}.

More importantly, the failure of exact solvability does not imply that the sharp bound fails. To this end, we introduce the following residual and establish a new exact decomposition relating it to the Haraux function. For $\gamma>0$, we set  
\begin{equation}
\mathcal R_\gamma(a,a^*)
=
\frac{\|a\|^2}{2\gamma}
+\frac{\gamma}{2}\|a^*\|_*^2
+\langle a,a^*\rangle.
\label{eq:residual-intro}
\end{equation}
For every $(y,y^*)\in\gra A$, the residual satisfies
\begin{equation}
\begin{aligned}
&\langle x-y,y^*-u^*\rangle
+\mathcal R_\gamma(y-x,y^*-u^*)\\
&\qquad=
\frac{1}{2}\left(
\frac{\|x-y\|^2}{\gamma}
+\gamma\|u^*-y^*\|_*^2
\right).
\end{aligned}
\label{eq:identity-intro}
\end{equation}
Moreover, for a maximally monotone operator of type~(NI), quasidensity guarantees graph points with arbitrarily small residual \cite{simons2020}. Therefore, by combining Eq. (\ref{eq:identity-intro}) with this approximation property, we can obtain the sharp $\frac{1}{2}$ bound. 

Beyond this sharp extension, Eq. (\ref{eq:identity-intro}) yields a lower bound involving the residual infimum for every operator with a nonempty graph. For maximally monotone operators, the residual infimum is zero at every target for every $\gamma>0$ exactly when the operator is of type~(NI).   
Thus, Eq. (\ref{eq:identity-intro}) yields a new characterization of type~(NI). We also provide an example (Example~\ref{ex:nonattainment}) showing that the residual infimum may be zero even though no graph point makes the residual vanish.  These results lead to the following three contributions.

\subsection{Contributions}

The contributions of this paper are as follows.

(1) We prove that the sharp $\frac{1}{2}$ lower bound in terms of $d_{\gra A,\gamma}$ holds beyond reflexivity. Specifically, the bound holds for every maximally monotone operator of type~(NI) on an arbitrary real Banach space and for every weight $\gamma>0$, and $\frac{1}{2}$ remains the best uniform constant.

(2) We establish an exact pointwise decomposition that connects the local contributions to the Haraux function with weighted squared displacement. 
At every graph point, the local contribution to the Haraux function and the residual $\mathcal R_\gamma$ sum to $\frac{1}{2}$ times the weighted squared displacement. 
This pointwise decomposition also yields a global lower bound for every operator with a nonempty graph. 
Moreover, for maximally monotone operators, this pointwise decomposition yields a new characterization of type~(NI) in terms of the Haraux function.

(3) We remove exact metric resolvent solvability from the proof of the sharp bound. The reflexive argument uses a graph point where $\mathcal R_\gamma=0$, but the pointwise decomposition shows that graph points with arbitrarily small residual are sufficient. Through quasidensity, type~(NI) provides these graph points on an arbitrary real Banach space and yields the sharp $\frac{1}{2}$ bound without reflexivity. 
We also provide an example on $c_0$ (Example~\ref{ex:nonattainment}) in which graph points with arbitrarily small residual exist even though no graph point makes the residual vanish.

The paper is organized as follows. Section~\ref{sec:preliminaries} introduces some preliminary notation and background. Section~\ref{sec:bound} proves the weighted form of quasidensity, establishes the exact decomposition, derives its consequences for the residual infimum, and proves the sharp lower bound. Section~\ref{sec:certificates} investigates exact and approximate residual vanishing, distinguishes the vanishing of the residual infimum from its attainment, and derives a graph distance lower bound for the Fenchel-Young gap.  Conclusions are presented in Section \ref{sec:conclusion}.

\section{Preliminaries}
\label{sec:preliminaries}

Throughout this paper, $X$ is an arbitrary real Banach space, $X^*$ is its topological dual, and $\langle\cdot,\cdot\rangle$ denotes the canonical duality pairing. The norms of $X$ and $X^*$ are denoted by $\|\cdot\|$ and $\|\cdot\|_*$, respectively. The canonical image of $x\in X$ in $X^{**}$ is denoted by $\widehat{x}$. 

\begin{definition}[Monotonicity and type~(NI)]
\label{def:operator-class}
Let $A:X\rightrightarrows X^*$. Its graph and domain are
\begin{equation}
\gra A=\{(x,x^*)\in X\times X^*\mid x^*\in Ax\}
\quad\text{and}\quad
\operatorname{dom}A=\{x\in X\mid Ax\neq\emptyset\}.
\label{eq:graph-domain}
\end{equation}
The operator $A$ is monotone if
\begin{equation}
\langle x-y,x^*-y^*\rangle\geq0,
\label{eq:monotonicity}
\end{equation}
for all $(x,x^*),(y,y^*)\in\gra A$. It is maximally monotone if its graph is maximal, with respect to set inclusion, among monotone subsets of $X\times X^*$.

A maximally monotone operator $A$ is of type~\emph{(NI)} if, for every $(u^*,x^{**})\in X^*\times X^{**}$,
\begin{equation}
\inf_{(y,y^*)\in\gra A}
\langle y^*-u^*,\widehat y-x^{**}\rangle
\leq 0.
\label{eq:type-ni}
\end{equation}
\end{definition}

For a maximally monotone operator $A$, a point $(x,x^*)\in X\times X^*$ belongs to $\gra A$ if and only if 
\begin{equation}
\langle x-y,x^*-y^*\rangle\geq0
\quad \forall (y,y^*)\in\gra A.
\label{eq:maximality-test}
\end{equation}
Through the Fitzpatrick representation theorem, this characterization also gives the nonnegativity of $H_A$ and identifies its zero set with $\gra A$. 

To compare the Haraux function with the distance to the graph, we first introduce the quantities used in the comparison. 

\begin{definition}[Local graph quantities]
\label{def:local-energies}
Let $A:X\rightrightarrows X^*$ be an operator. Let $(x,u^*)\in X\times X^*$ and $\gamma>0$. For a graph point $(y,y^*)\in\gra A$, define its Haraux contribution by
\begin{equation}
h_A(x,u^*;y,y^*)
=\langle x-y,y^*-u^*\rangle.
\label{eq:local-haraux}
\end{equation}
Thus,
\begin{equation}
H_A(x,u^*)
=\sup_{(y,y^*)\in\gra A}h_A(x,u^*;y,y^*).
\label{eq:haraux-formal}
\end{equation}
The weighted squared displacement from $(x,u^*)$ to $(y,y^*)$ is
\begin{equation}
D_\gamma^2((x,u^*),(y,y^*))
=\frac{\|x-y\|^2}{\gamma}
+\gamma\|u^*-y^*\|_*^2,
\label{eq:weighted-displacement}
\end{equation}
and the corresponding squared graph distance is
\begin{equation}
d_{\gra A,\gamma}^2(x,u^*)
=\inf_{(y,y^*)\in\gra A}
D_\gamma^2((x,u^*),(y,y^*)).
\label{eq:weighted-distance}
\end{equation}
Finally, for $(a,a^*)\in X\times X^*$, define the weighted residual
\begin{equation}
\mathcal R_\gamma(a,a^*)
=\frac{\|a\|^2}{2\gamma}
+\frac{\gamma}{2}\|a^*\|_*^2
+\langle a,a^*\rangle.
\label{eq:residual-formal}
\end{equation}
\end{definition}

We use the following notation for the residual infimum along the graph. 

\begin{definition}[Residual infimum]
\label{def:weighted-defect}
Let $A:X\rightrightarrows X^*$ be an operator with nonempty graph. For every $(x,u^*)\in X\times X^*$ and every $\gamma>0$, define
\begin{equation}
\delta_{A,\gamma}(x,u^*)
=
\inf_{(y,y^*)\in\gra A}
\mathcal R_\gamma(y-x,y^*-u^*).
\label{eq:weighted-defect}
\end{equation}
\end{definition}

It is also the weighted form of the infimum appearing in the definition of quasidensity.

We also introduce the definition of the quasidensity as follows.  
\begin{definition}[Quasidensity]
\label{def:quasidensity}
A subset $M\subset X\times X^*$ is quasidense if, for every $(x,u^*)\in X\times X^*$,
\begin{equation}
\inf_{(y,y^*)\in M}
\left(
\frac{1}{2}\|y-x\|^2
+\frac{1}{2}\|y^*-u^*\|_*^2
+\langle y-x,y^*-u^*\rangle
\right)
=0.
\label{eq:quasidensity}
\end{equation}
\end{definition}

Quasidensity was developed as a direct Banach space approximation property \cite{simons2016,simons2018}. For maximally monotone operators, it is equivalent to type~(NI) \cite{simons2020,simons2025faces}. 
A general maximally monotone operator on a nonreflexive space need not be quasidense \cite{simons2016,simons2018,bauschkeborweinwangyao2012}. 

Type~(NI) also coincides with Gossez type~(D) \cite{marquesalvessvaiter2010}, whose bounded-net formulation expresses the approximation of the monotone bidual extension by original graph points. Representative functions and bounded-net closure operations on general Banach spaces continue this line of analysis \cite{eberhardwenczel2021,eberhardwenczel2025}, and approximate duality mappings describe nearby graph values for type~(D) operators \cite{nguyennguyenhuynh2024}. These formulations provide the broader nonreflexive approximation context. However, these results cannot directly yield the sharp Haraux lower bound beyond reflexivity.

\section{The exact decomposition and the sharp bound}
\label{sec:bound}

To this end, we first extend the vanishing of the residual infimum from $\gamma=1$ to every $\gamma>0$ as follows, and then establish the exact pointwise decomposition that yields the sharp bound.

\begin{lemma}[Weighted quasidensity]
\label{lem:weighted-quasidensity}
Let $A:X\rightrightarrows X^*$ be maximally monotone of type~(NI), and let $\gamma>0$. Then, $\forall (a,a^*)\in X\times X^*$, we have $\mathcal R_\gamma(a,a^*)\geq0$. Moreover, $\forall (x,u^*)\in X\times X^*$, we have
\begin{equation}
\inf_{(y,y^*)\in\gra A}
\mathcal R_\gamma(y-x,y^*-u^*)
=0.
\label{eq:weighted-quasidensity}
\end{equation}
\end{lemma}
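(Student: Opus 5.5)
The nonnegativity of $\mathcal R_\gamma$ is elementary. We estimate $\langle a,a^*\rangle\geq-\|a\|\,\|a^*\|_*$ and then use the weighted AM--GM inequality $\|a\|\,\|a^*\|_*\leq \frac{\|a\|^2}{2\gamma}+\frac{\gamma}{2}\|a^*\|_*^2$, which is $\frac12\bigl(\|a\|/\sqrt{\gamma}-\sqrt{\gamma}\|a^*\|_*\bigr)^2\geq0$. Hence $\mathcal R_\gamma(a,a^*)\geq0$, and consequently the infimum in (\ref{eq:weighted-quasidensity}) is automatically $\geq0$. It remains to show that it is $\leq0$.

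For the second claim I plan to reduce the weight $\gamma$ to the unweighted case $\gamma=1$ by rescaling the operator. Set $\lambda=\sqrt{\gamma}$ and consider the operator $B:X\rightrightarrows X^*$ with graph $\gra B=\{(y/\lambda,\lambda y^*)\mid (y,y^*)\in\gra A\}$, i.e. $B=\lambda A(\lambda\,\cdot)$. The pairing is invariant under this change of variables: $\langle y/\lambda-z/\lambda,\lambda y^*-\lambda z^*\rangle=\langle y-z,y^*-z^*\rangle$. Moreover, the map $(y,y^*)\mapsto(y/\lambda,\lambda y^*)$ is a bijection of $X\times X^*$. Together these give that $B$ is monotone, and also maximally monotone, since maximality transfers through the bijection. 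For type~(NI), given $(v^*,z^{**})$ I apply (\ref{eq:type-ni}) for $A$ at $(v^*/\lambda,\lambda z^{**})$ and use $\langle \lambda y^*-v^*,\widehat{y/\lambda}-z^{**}\rangle=\langle y^*-v^*/\lambda,\widehat y-\lambda z^{**}\rangle$. So $B$ is maximally monotone of type~(NI).

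By the equivalence of type~(NI) with quasidensity for maximally monotone operators \cite{simons2020}, $\gra B$ is quasidense. Given $(x,u^*)$, I apply (\ref{eq:quasidensity}) to $\gra B$ at the target $(x/\lambda,\lambda u^*)$. For $(y,y^*)\in\gra A$, the term becomes
\[
\frac12\Bigl\|\frac{y-x}{\lambda}\Bigr\|^2+\frac12\|\lambda(y^*-u^*)\|_*^2+\langle y-x,y^*-u^*\rangle=\mathcal R_\gamma(y-x,y^*-u^*),
\]
so the infimum over $\gra A$ of $\mathcal R_\gamma(y-x,y^*-u^*)$ equals $0$.

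The only real care needed is in verifying that type~(NI) transfers under the rescaling, specifically in correctly tracking the bidual variable $x^{**}$ and the canonical embedding. That step is routine once the substitution above is fixed, so I expect no substantial obstacle beyond relying on the cited equivalence between type~(NI) and quasidensity.
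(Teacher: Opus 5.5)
Your proof is correct and follows the paper's route. You obtain nonnegativity from the weighted Young inequality, then rescale $A$ to a maximally monotone operator of type~(NI), invoke Simons' equivalence with quasidensity, and read off the weighted infimum at a transformed target. The only difference is cosmetic: the paper uses the one-sided scaling $B=\gamma A$ at the target $(x,\gamma u^*)$ and obtains $\gamma\,\mathcal R_\gamma$, whereas your symmetric scaling $B=\sqrt{\gamma}\,A(\sqrt{\gamma}\,\cdot)$ preserves the pairing exactly and yields $\mathcal R_\gamma$ with no extra factor.
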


\begin{proof}
Fix $(a,a^*)\in X\times X^*$, from the duality inequality, the scalar Young inequality and Eq. (\ref{eq:residual-formal}), we have 
\begin{align}
\mathcal R_\gamma(a,a^*)
&\geq
\frac{\|a\|^2}{2\gamma}
+\frac{\gamma}{2}\|a^*\|_*^2
-\|a\|\|a^*\|_* \notag\\
&=
\frac{1}{2}\left(
\frac{\|a\|}{\sqrt\gamma}
-\sqrt\gamma\|a^*\|_*
\right)^2
\geq0.
\label{eq:residual-nonnegative}
\end{align}

Next, we fix $(x,u^*)\in X\times X^*$ and define $B:X\rightrightarrows X^*$ as follows. 
\begin{equation*}
Bz=\{\gamma z^*\mid z^*\in Az\}.
\end{equation*}
Then, if $(y,\gamma y^*)$ and $(z,\gamma z^*)$ belong to $\gra B$, we infer
\begin{align*}
\langle y-z,\gamma y^*-\gamma z^*\rangle
&=\gamma\langle y-z,y^*-z^*\rangle\\
&\geq0.
\end{align*}
Thus, $B$ is monotone. Let $(w,w^*)$ be monotonically related to $\gra B$, and $\forall (y,y^*)\in\gra A$, then we give
\begin{align*}
\left\langle w-y,\frac{w^*}{\gamma}-y^*\right\rangle
&=\frac{1}{\gamma}\langle w-y,w^*-\gamma y^*\rangle\\
&\geq0.
\end{align*}
The maximality of $A$ gives $\left(w,\frac{w^*}{\gamma}\right)\in\gra A$, from the above formulation, this implies $(w,w^*)\in\gra B$, thus $B$ is maximally monotone. 

Furthermore, for $(p^*,p^{**})\in X^*\times X^{**}$, the type~(NI) property of $A$ yields
\begin{align*}
&&\inf_{(y,\gamma y^*)\in\gra B}
\langle \gamma y^*-p^*,\widehat y-p^{**}\rangle&=
\gamma
\inf_{(y,y^*)\in\gra A}
\left\langle y^*-\frac{p^*}{\gamma},\widehat y-p^{**}\right\rangle \notag \\
&& &\leq0.
\end{align*}
This means $B$ is of type~(NI), and the equivalence between type~(NI) and quasidensity \cite{simons2020} shows that $\gra B$ is quasidense. Therefore, applying Definition~\ref{def:quasidensity} at $(x,\gamma u^*)$, we infer 
\begin{align*}
0
&=
\inf_{(y,\gamma y^*)\in\gra B}
\left(
\frac{1}{2}\|y-x\|^2
+\frac{1}{2}\|\gamma(y^*-u^*)\|_*^2
+\langle y-x,\gamma(y^*-u^*)\rangle
\right)\\
&=
\gamma
\inf_{(y,y^*)\in\gra A}
\mathcal R_\gamma(y-x,y^*-u^*).
\end{align*}
Since $\gamma>0$, this implies Eq. (\ref{eq:weighted-quasidensity}) is true.
\end{proof}

Next, we establish the exact pointwise decomposition in which the local contribution to the Haraux function and the residual  $\mathcal R_\gamma$ sum to one half of the weighted squared displacement.

\begin{proposition}[Exact decomposition]
\label{prop:decomposition}
Let $A:X\rightrightarrows X^*$ be any operator, $(x,u^*)\in X\times X^*$ and $\gamma>0$. Then, $\forall (y,y^*)\in\gra A$,
\begin{equation}
h_A(x,u^*;y,y^*)
+\mathcal R_\gamma(y-x,y^*-u^*)
=
\frac{1}{2}D_\gamma^2((x,u^*),(y,y^*)).
\label{eq:decomposition}
\end{equation}
\end{proposition}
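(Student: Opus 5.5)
The identity in Proposition~\ref{prop:decomposition} is purely algebraic: it uses no monotonicity, maximality, or type~(NI), only bilinearity of the duality pairing and the symmetry $\|-v\|=\|v\|$ of the norms. I will therefore fix an arbitrary $(y,y^*)\in\gra A$ and expand both sides directly from the definitions in Definition~\ref{def:local-energies}. To make the cancellation transparent, set $a=y-x$ and $a^*=y^*-u^*$.

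In this notation, Eq.~(\ref{eq:local-haraux}) gives
\begin{equation*}
h_A(x,u^*;y,y^*)=\langle x-y,y^*-u^*\rangle=\langle -a,a^*\rangle=-\langle a,a^*\rangle .
\end{equation*}
Eq.~(\ref{eq:residual-formal}) gives
\begin{equation*}
\mathcal R_\gamma(a,a^*)=\frac{\|a\|^2}{2\gamma}+\frac{\gamma}{2}\|a^*\|_*^2+\langle a,a^*\rangle .
\end{equation*}
Adding the two, the bilinear terms $\pm\langle a,a^*\rangle$ cancel, and the sum equals
\begin{equation*}
\frac{\|a\|^2}{2\gamma}+\frac{\gamma}{2}\|a^*\|_*^2 .
\end{equation*}
Since $\|a\|=\|x-y\|$ and $\|a^*\|_*=\|u^*-y^*\|_*$ by symmetry of the norms, this is exactly $\frac12 D_\gamma^2((x,u^*),(y,y^*))$ by Eq.~(\ref{eq:weighted-displacement}). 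That is Eq.~(\ref{eq:decomposition}).

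There is no genuine obstacle. The only point needing care is the sign bookkeeping: the Haraux contribution pairs $x-y$, whereas the residual is evaluated at $y-x$, and this sign flip is exactly what produces the cancellation. The norm terms also need the displacements oriented consistently, which the symmetry of the norms guarantees. Because nothing about $A$ beyond $(y,y^*)\in\gra A$ is used, the identity holds for every operator, as the proposition states.
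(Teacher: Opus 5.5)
Your proof is correct and follows the paper's argument: expand both sides from the definitions, cancel the cross terms $\pm\langle y-x,y^*-u^*\rangle$, and use $\|y-x\|=\|x-y\|$ and $\|y^*-u^*\|_*=\|u^*-y^*\|_*$. The substitution $a=y-x$, $a^*=y^*-u^*$ is only a notational convenience and does not change the argument.
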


\begin{proof}
By Definition~\ref{def:local-energies}, we infer 
\begin{align*}
&h_A(x,u^*;y,y^*)+\mathcal R_\gamma(y-x,y^*-u^*)\\
&=
\langle x-y,y^*-u^*\rangle
+\frac{\|y-x\|^2}{2\gamma}
+\frac{\gamma}{2}\|y^*-u^*\|_*^2
+\langle y-x,y^*-u^*\rangle\\
&=
\frac{\|x-y\|^2}{2\gamma}
+\frac{\gamma}{2}\|u^*-y^*\|_*^2
=\frac{1}{2}D_\gamma^2((x,u^*),(y,y^*)).
\end{align*}
\end{proof}

\begin{remark}[The geometric identification]
\label{rem:geometric-identification}
Proposition~\ref{prop:decomposition} shows that, at each graph point, the Haraux contribution and the residual $\mathcal R_\gamma$ sum exactly to $\frac{1}{2}$ times the weighted squared displacement. This pointwise decomposition requires no monotonicity assumption and no reflexivity assumption, and the type~(NI) assumption is used only to obtain graph points with arbitrarily small residual. 

\end{remark}

Furthermore, using the residual infimum in this decomposition gives the following global consequences. 

\begin{corollary}[Consequences of the decomposition]
\label{cor:defect-form}
Let $A:X\rightrightarrows X^*$ be an operator with nonempty graph. Then, $\forall (x,u^*)\in X\times X^*$ and $\forall \gamma>0$, we have 
\begin{equation}
\sup_{(y,y^*)\in\gra A}
\left[
h_A(x,u^*;y,y^*)
-\frac{1}{2}D_\gamma^2((x,u^*),(y,y^*))
\right]
=-\delta_{A,\gamma}(x,u^*),
\label{eq:defect-identity}
\end{equation}
and
\begin{equation}
H_A(x,u^*)+\delta_{A,\gamma}(x,u^*)
\geq
\frac{1}{2}d_{\gra A,\gamma}^2(x,u^*).
\label{eq:defect-corrected-bound}
\end{equation}

Additionally, if $A$ is maximally monotone, the following statements are equivalent:
\begin{enumerate}
\item[(i)] $A$ is of type~\emph{(NI)}. 
\item[(ii)] $\delta_{A,\gamma}(x,u^*)=0$ for every $(x,u^*)\in X\times X^*$ and every $\gamma>0$. 
\item[(iii)] for every $(x,u^*)\in X\times X^*$ and every $\gamma>0$, we have 
\begin{equation}
\sup_{(y,y^*)\in\gra A}
\left[
h_A(x,u^*;y,y^*)
-\frac{1}{2}D_\gamma^2((x,u^*),(y,y^*))
\right]
=0
\label{eq:haraux-side-characterization}
\end{equation}
\end{enumerate}
\end{corollary}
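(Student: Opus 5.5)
The plan is to derive everything from Proposition~\ref{prop:decomposition} by taking suprema and infima over $\gra A$. I will then obtain the equivalence by combining Lemma~\ref{lem:weighted-quasidensity} with the case $\gamma=1$ of Definition~\ref{def:quasidensity} and the equivalence between quasidensity and type~(NI) for maximally monotone operators \cite{simons2020}.

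\emph{Step 1: the identity (\ref{eq:defect-identity}).} For each $(y,y^*)\in\gra A$, Eq.~(\ref{eq:decomposition}) rearranges to
\begin{equation*}
h_A(x,u^*;y,y^*)-\tfrac12 D_\gamma^2((x,u^*),(y,y^*))=-\mathcal R_\gamma(y-x,y^*-u^*).
\end{equation*}
Taking the supremum over $\gra A$, which is nonempty, gives $\sup(-\mathcal R_\gamma)=-\inf\mathcal R_\gamma=-\delta_{A,\gamma}(x,u^*)$. Eq.~(\ref{eq:residual-nonnegative}) shows $\mathcal R_\gamma\ge 0$ pointwise, and that computation uses no property of $A$. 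Hence $\delta_{A,\gamma}$ is finite and nonnegative, so no $\infty-\infty$ issues arise.

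\emph{Step 2: the bound (\ref{eq:defect-corrected-bound}).} Again using (\ref{eq:decomposition}), for each graph point I have
\begin{equation*}
\tfrac12 D_\gamma^2 = h_A+\mathcal R_\gamma \le H_A(x,u^*)+\mathcal R_\gamma(y-x,y^*-u^*).
\end{equation*}
I also have $\tfrac12 d_{\gra A,\gamma}^2\le \tfrac12 D_\gamma^2$, so $\tfrac12 d^2_{\gra A,\gamma}(x,u^*)\le H_A(x,u^*)+\mathcal R_\gamma(y-x,y^*-u^*)$ for every graph point. Taking the infimum over $(y,y^*)$ on the right yields the claim. If $H_A=+\infty$ the inequality is trivial. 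The only care needed is to bound $D^2$ from below by $d^2$ before taking the infimum of $\mathcal R_\gamma$ separately, rather than trying to take an infimum of a sum.

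\emph{Step 3: the equivalences.} (i)$\Rightarrow$(ii) is exactly Lemma~\ref{lem:weighted-quasidensity}. (ii)$\Leftrightarrow$(iii) follows immediately from (\ref{eq:defect-identity}), since the left side of (\ref{eq:haraux-side-characterization}) equals $-\delta_{A,\gamma}(x,u^*)$. For (ii)$\Rightarrow$(i), specialize to $\gamma=1$: then $\mathcal R_1(y-x,y^*-u^*)$ is precisely the expression in (\ref{eq:quasidensity}), so $\delta_{A,1}\equiv 0$ says that $\gra A$ is quasidense. Since $A$ is maximally monotone, the equivalence of quasidensity with type~(NI) \cite{simons2020} gives (i).

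The main obstacle is conceptual rather than computational. The reverse implication rests entirely on the cited equivalence of quasidensity and type~(NI), and it is the one place where maximal monotonicity enters. I would state this dependence explicitly and note that only the case $\gamma=1$ of (ii) is needed.
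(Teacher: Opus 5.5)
Your proposal is correct and follows the paper's proof essentially step for step: the identity via the rearranged decomposition, (i)$\Rightarrow$(ii) by Lemma~\ref{lem:weighted-quasidensity}, (ii)$\Rightarrow$(i) by specializing to $\gamma=1$ and invoking the quasidensity/type~(NI) equivalence, and (ii)$\Leftrightarrow$(iii) from Eq.~(\ref{eq:defect-identity}). The only difference is cosmetic: in Step~2 you take the infimum over graph points of $\frac12 d_{\gra A,\gamma}^2\le H_A+\mathcal R_\gamma$ directly, whereas the paper picks an $\varepsilon$-minimizer of the residual and lets $\varepsilon\to0$, which is the same argument.
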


\begin{proof}
Fix $(x,u^*)\in X\times X^*$, $\gamma>0$, $\forall (y,y^*)\in\gra A$, from Proposition~\ref{prop:decomposition}, we have 
\begin{equation*}
h_A(x,u^*;y,y^*)
-\frac{1}{2}D_\gamma^2((x,u^*),(y,y^*))
=-\mathcal R_\gamma(y-x,y^*-u^*).
\end{equation*}
Then, taking the supremum on the left and the residual infimum on the right,  Eq. (\ref{eq:defect-identity}) can be proved. 

To prove Eq. (\ref{eq:defect-corrected-bound}), let $\varepsilon>0$,  Definition~\ref{def:weighted-defect} gives a point $(y_\varepsilon,y_\varepsilon^*)\in\gra A$ such that
\begin{equation*}
\mathcal R_\gamma(y_\varepsilon-x,y_\varepsilon^*-u^*)
<\delta_{A,\gamma}(x,u^*)+\varepsilon.
\end{equation*}
Using this point in the Haraux supremum, and then applying Proposition~\ref{prop:decomposition} and Eq. (\ref{eq:weighted-distance}),  we have 
\begin{align*}
H_A(x,u^*)+\delta_{A,\gamma}(x,u^*)
&\geq h_A(x,u^*;y_\varepsilon,y_\varepsilon^*)
+\delta_{A,\gamma}(x,u^*)\\
&>\frac{1}{2}D_\gamma^2((x,u^*),(y_\varepsilon,y_\varepsilon^*))-\varepsilon\\
&\geq\frac{1}{2}d_{\gra A,\gamma}^2(x,u^*)-\varepsilon.
\end{align*}
Let $\varepsilon \to 0$, then Eq. (\ref{eq:defect-corrected-bound}) is true.

Suppose that $A$ is maximally monotone. If (i) holds, Lemma~\ref{lem:weighted-quasidensity} gives
\begin{equation*}
\delta_{A,\gamma}(x,u^*)
=
\inf_{(y,y^*)\in\gra A}
\mathcal R_\gamma(y-x,y^*-u^*)
=
0,
\qquad
\forall (x,u^*)\in X\times X^*,
\quad
\forall \gamma>0.
\end{equation*}
Thus (ii) holds.

Conversely, assume (ii) holds. Let $\gamma=1$, we have 
\begin{equation*}
\inf_{(y,y^*)\in\gra A}
\left(
\frac{1}{2}\|y-x\|^2
+
\frac{1}{2}\|y^*-u^*\|_*^2
+
\langle y-x,y^*-u^*\rangle
\right)
=
0 ~(\forall (x,u^*)\in X\times X^*). 
\end{equation*}
Thus, $\gra A$ is quasidense. Since $A$ is maximally monotone, the equivalence between quasidensity and type~(NI) \cite{simons2020} yields (i). 

Finally, from Eq. (\ref{eq:defect-identity}), we have 
\begin{equation*}
\sup_{(y,y^*)\in\gra A}
\left[
h_A(x,u^*;y,y^*)
-
\frac{1}{2}D_\gamma^2((x,u^*),(y,y^*))
\right]
=
-\delta_{A,\gamma}(x,u^*).
\end{equation*}
Therefore, the supremum in (iii) is zero if and only if
$\delta_{A,\gamma}(x,u^*)=0$. This implies the equivalence of (ii) and (iii).
\end{proof}

Eq. (\ref{eq:haraux-side-characterization}) gives a new characterization of type~(NI) in terms of the Haraux function. Therefore, combining Eq. (\ref{eq:defect-corrected-bound}) with Lemma~\ref{lem:weighted-quasidensity},  we obtain the following sharp lower bound.

\begin{theorem}[Sharp lower bound]
\label{thm:main}
Let $X$ be an arbitrary real Banach space, and let $A:X\rightrightarrows X^*$ be maximally monotone of type~(NI). Then, $\forall (x,u^*)\in X\times X^*$ and $\forall \gamma>0$, we have 
\begin{equation}
H_A(x,u^*)
\geq
\frac{1}{2} d_{\gra A,\gamma}^2(x,u^*).
\label{eq:main-bound}
\end{equation}
\end{theorem}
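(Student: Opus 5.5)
The plan is to combine the corrected bound Eq. (\ref{eq:defect-corrected-bound}) of Corollary~\ref{cor:defect-form} with the vanishing of the residual infimum from Lemma~\ref{lem:weighted-quasidensity}. Fix $(x,u^*)\in X\times X^*$ and $\gamma>0$. A maximally monotone operator of type~(NI) has nonempty graph, so Corollary~\ref{cor:defect-form} applies. (If one prefers not to assume nonemptiness separately, note that the empty graph is not maximal: any single point is a monotone extension of it.) The corollary gives
\[
H_A(x,u^*)+\delta_{A,\gamma}(x,u^*)\geq \tfrac12 d_{\gra A,\gamma}^2(x,u^*).
\]

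Next I will invoke Lemma~\ref{lem:weighted-quasidensity}, equivalently implication (i)$\Rightarrow$(ii) of Corollary~\ref{cor:defect-form}. This gives $\delta_{A,\gamma}(x,u^*)=0$ for this particular pair and weight. Substituting into the displayed inequality yields Eq. (\ref{eq:main-bound}) immediately.

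As a self-contained alternative that avoids the bookkeeping with $\delta_{A,\gamma}$, I would argue directly as follows. For $\varepsilon>0$, Lemma~\ref{lem:weighted-quasidensity} supplies $(y_\varepsilon,y_\varepsilon^*)\in\gra A$ with $\mathcal R_\gamma(y_\varepsilon-x,y_\varepsilon^*-u^*)<\varepsilon$. Proposition~\ref{prop:decomposition} then gives
\[
H_A(x,u^*)\geq h_A(x,u^*;y_\varepsilon,y_\varepsilon^*)>\tfrac12 D_\gamma^2((x,u^*),(y_\varepsilon,y_\varepsilon^*))-\varepsilon\geq \tfrac12 d_{\gra A,\gamma}^2(x,u^*)-\varepsilon,
\]
and letting $\varepsilon\to0$ finishes the proof. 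The case $H_A(x,u^*)=+\infty$ is trivial, and all quantities on the right are finite.

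There is no real obstacle at this stage. The substantive work, namely transferring quasidensity from $\gamma=1$ to an arbitrary weight through the rescaled operator $B=\gamma A$ and using Simons' equivalence of type~(NI) and quasidensity, has already been done in Lemma~\ref{lem:weighted-quasidensity}. The only points to check are that the infimum in Eq. (\ref{eq:weighted-distance}) is taken over the same graph as the supremum defining $H_A$, and that the inequalities survive the limit. Sharpness of the constant $\frac12$ is not part of this statement, so I would not address it here.
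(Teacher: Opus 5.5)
Your proposal is correct and is essentially the paper's proof. The paper runs exactly your direct argument: Lemma~\ref{lem:weighted-quasidensity} gives a graph point with $\mathcal R_\gamma<\varepsilon$, Proposition~\ref{prop:decomposition} turns this into $H_A(x,u^*)\geq\frac{1}{2}d_{\gra A,\gamma}^2(x,u^*)-\varepsilon$, and then $\varepsilon\to0$. Your first route, through Eq.~(\ref{eq:defect-corrected-bound}) with $\delta_{A,\gamma}(x,u^*)=0$, is the same argument packaged via Corollary~\ref{cor:defect-form}, as the paper notes just before the theorem; your remark that a maximal monotone graph cannot be empty justifies a nonemptiness the paper only asserts.
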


\begin{proof}
Fix $(x,u^*)\in X\times X^*$ and $\gamma>0$, from $\gra A\neq\emptyset$ and Eq. (\ref{eq:weighted-distance}), we have 
\begin{equation*}
d_{\gra A,\gamma}^2(x,u^*)<+\infty.
\end{equation*}
If $H_A(x,u^*)=+\infty$, Eq. (\ref{eq:main-bound}) holds immediately. 

If not, assume that $H_A(x,u^*)<+\infty$, from Lemma~\ref{lem:weighted-quasidensity}, we have
\begin{equation*}
\inf_{(y,y^*)\in\gra A}
\mathcal R_\gamma(y-x,y^*-u^*)
=
0.
\end{equation*}
Thus, $\forall\varepsilon>0$, there exists
$(y_\varepsilon,y_\varepsilon^*)\in\gra A$ such that
\begin{equation*}
\mathcal R_\gamma
(y_\varepsilon-x,y_\varepsilon^*-u^*)
<
\varepsilon.
\end{equation*}
By the definition of $H_A$ and Proposition~\ref{prop:decomposition}, we infer 
\begin{align*}
H_A(x,u^*)
&\geq
h_A(x,u^*;y_\varepsilon,y_\varepsilon^*)\\
&=
\frac{1}{2}
D_\gamma^2
\bigl((x,u^*),(y_\varepsilon,y_\varepsilon^*)\bigr)
-
\mathcal R_\gamma
(y_\varepsilon-x,y_\varepsilon^*-u^*)\\
&>
\frac{1}{2}
D_\gamma^2
\bigl((x,u^*),(y_\varepsilon,y_\varepsilon^*)\bigr)
-
\varepsilon\\
&\geq
\frac{1}{2}
d_{\gra A,\gamma}^2(x,u^*)
-
\varepsilon,
\end{align*}
where the last inequality follows from Eq. (\ref{eq:weighted-distance}).
Let $\varepsilon\to 0$, thus Eq. (\ref{eq:main-bound}) is true.
\end{proof}

From Theorem \ref{thm:main}, our proof shows that exact metric resolvent solvability is not necessary for the sharp lower bound. Moreover, compared to the previous proof on the reflexive Banach spaces, which uses reflexivity to obtain a graph point with zero residual, our proof only requires graph points with arbitrarily small residual for maximally monotone operators of type~(NI). This weaker condition is sufficient to obtain the sharp constant $\frac{1}{2}$ without requiring exact residual attainment.

We also prove that the constant $\frac{1}{2}$ in Theorem~\ref{thm:main} is optimal by constructing an example for which the lower bound is attained with a positive value.

\begin{example}[Sharpness for every weight]
\label{ex:sharpness}
Let $X$ be a nonzero real Hilbert space and identify $X^*$ with $X$. Fix $\gamma>0$ and define $A=\gamma^{-1}I$. For $x,u\in X$, we set 
\begin{equation}
d=x-\gamma u
\quad\text{and}\quad
m=\frac{x+\gamma u}{2}.
\label{eq:sharp-midpoint}
\end{equation}
Writing $y=m+v$ gives $x-y=\frac{d}{2}-v$ and $y-\gamma u=\frac{d}{2}+v$. Substitution gives
\begin{align}
H_A(x,u)
&=\frac{1}{\gamma}\sup_{y\in X}\langle x-y,y-\gamma u\rangle \notag\\
&=\frac{1}{\gamma}\sup_{v\in X}
\left(\frac{1}{4}\|d\|^2-\|v\|^2\right)
=\frac{1}{4\gamma}\|x-\gamma u\|^2.
\label{eq:sharp-haraux}
\end{align}
Similarly,
\begin{align}
d_{\gra A,\gamma}^2(x,u)
&=\frac{1}{\gamma}\inf_{y\in X}
\bigl(\|x-y\|^2+\|\gamma u-y\|^2\bigr) \notag\\
&=\frac{1}{\gamma}\inf_{v\in X}
\left(2\|v\|^2+\frac{1}{2}\|d\|^2\right)
=\frac{1}{2\gamma}\|x-\gamma u\|^2.
\label{eq:sharp-distance}
\end{align}
If $x\neq\gamma u$, then
\begin{equation}
H_A(x,u)=\frac{1}{2}d_{\gra A,\gamma}^2(x,u)>0.
\label{eq:sharp-equality}
\end{equation}
Thus, $\forall \gamma>0$, there exists a maximally monotone operator of type~(NI) and a target point at which equality in Eq.~(\ref{eq:main-bound}) holds. This means that no larger uniform constant can replace $\frac{1}{2}$ in Theorem~\ref{thm:main}. 
\end{example}

\section{Residual vanishing and exact attainment}
\label{sec:certificates}

In this section, we investigate the distinction between exact residual vanishing and the vanishing of the residual infimum,  explaining why the sharp bound does not require exact residual attainment.

\subsection{Characterization of residual vanishing}

To characterize the graph points at which the residual vanishes, we first introduce the definitions as follows.

\begin{definition}[Convex-analytic notation]
\label{def:convex-analysis}
Let $\Gamma_0(X)$ denote the set of proper lower semicontinuous convex functions from $X$ to $(-\infty,+\infty]$. For $\varphi\in\Gamma_0(X)$, its conjugate and subdifferential are
\begin{align}
\varphi^*(u^*)
&=\sup_{x\in X}\bigl(\langle x,u^*\rangle-\varphi(x)\bigr),
\label{eq:conjugate-definition}\\
\partial\varphi(x)
&=\left\{
x^*\in X^*\;\middle|\;
\varphi(z)\geq\varphi(x)+\langle z-x,x^*\rangle
\text{ for every }z\in X
\right\}.
\label{eq:subdifferential}
\end{align}
\end{definition}

\begin{definition}[Duality mapping]
\label{def:duality-mapping}
Let
\begin{equation}
q(a)=\frac{1}{2}\|a\|^2.
\end{equation}
The normalized duality mapping is
\begin{equation}
J=\partial q.
\label{eq:duality-mapping}
\end{equation}
\end{definition}

Next, we prove the following lemma.  

\begin{lemma}[Quadratic conjugacy]
\label{lem:quadratic-conjugacy}
Let $\gamma>0$ and define $q_\gamma:X\to\mathbb R$ by  
\begin{equation*}
q_\gamma(a)=\frac{\|a\|^2}{2\gamma},
\qquad
\forall a\in X.
\end{equation*}
Then, $\forall b^*\in X^*$, we have 
\begin{equation}
q_\gamma^*(b^*)=\frac{\gamma}{2}\|b^*\|_*^2.
\label{eq:quadratic-conjugate}
\end{equation}
Moreover, $\forall a\in X$, we also have 
\begin{equation}
\partial q_\gamma(a)=\frac{1}{\gamma} J(a),
\label{eq:scaled-subdifferential}
\end{equation}
and
\begin{equation}
J(a)
=\left\{
b^*\in X^*\;\middle|\;
\langle a,b^*\rangle=\|a\|^2=\|b^*\|_*^2
\right\}.
\label{eq:duality-mapping-explicit}
\end{equation}
\end{lemma}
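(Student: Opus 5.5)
I will prove the three claims in the order stated, starting with the explicit description of $J$ in Eq.~(\ref{eq:duality-mapping-explicit}). That description is the core fact, and the other two follow from it by scaling.

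\emph{Conjugate formula (\ref{eq:quadratic-conjugate}).} Fix $b^*\in X^*$. For the upper bound, the duality inequality gives $\langle a,b^*\rangle\leq\|a\|\|b^*\|_*$. Hence
\begin{equation*}
\langle a,b^*\rangle-\frac{\|a\|^2}{2\gamma}\leq \sup_{t\geq0}\Bigl(t\|b^*\|_*-\frac{t^2}{2\gamma}\Bigr)=\frac{\gamma}{2}\|b^*\|_*^2 .
\end{equation*}
This is the same scalar Young step as in Eq.~(\ref{eq:residual-nonnegative}). For the reverse inequality, the supremum defining $\|b^*\|_*$ need not be attained, so I will approximate instead. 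Choose unit vectors $e_n$ with $\langle e_n,b^*\rangle\to\|b^*\|_*$ and test with $a=\gamma\|b^*\|_* e_n$. This gives
\begin{equation*}
\gamma\|b^*\|_*\langle e_n,b^*\rangle-\frac{\gamma}{2}\|b^*\|_*^2\to\frac{\gamma}{2}\|b^*\|_*^2 .
\end{equation*}

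\emph{Characterization (\ref{eq:duality-mapping-explicit}).} Here I use the Fenchel--Young equality: since $q\in\Gamma_0(X)$ is finite and convex, $b^*\in\partial q(a)$ if and only if $q(a)+q^*(b^*)=\langle a,b^*\rangle$. This follows directly from Eq.~(\ref{eq:subdifferential}) and Eq.~(\ref{eq:conjugate-definition}). Taking $\gamma=1$ in the first step gives $q^*=\frac12\|\cdot\|_*^2$, so membership in $J(a)$ is equivalent to
\begin{equation*}
\tfrac12\|a\|^2+\tfrac12\|b^*\|_*^2=\langle a,b^*\rangle .
\end{equation*}
Combining this with $\langle a,b^*\rangle\leq\|a\|\|b^*\|_*$ yields $\tfrac12(\|a\|-\|b^*\|_*)^2\leq0$. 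Hence $\|a\|=\|b^*\|_*$, and then $\langle a,b^*\rangle=\|a\|^2=\|b^*\|_*^2$. Conversely, these equalities plainly give the Fenchel--Young equality.

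\emph{Scaled subdifferential (\ref{eq:scaled-subdifferential}).} Since $q_\gamma=\gamma^{-1}q$ with $\gamma^{-1}>0$, the subgradient inequality in Eq.~(\ref{eq:subdifferential}) scales directly: $b^*\in\partial q_\gamma(a)$ if and only if $\gamma b^*\in\partial q(a)=J(a)$. Therefore $\partial q_\gamma(a)=\gamma^{-1}J(a)$. The only point that needs care is the lower bound in the conjugate computation, where the dual norm may not be attained; this is handled by the approximating sequence $(e_n)$. Everything else is routine.
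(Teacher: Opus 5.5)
Your proof is correct and follows the paper's argument essentially step for step: the scalar Young upper bound and near-norming unit vectors for the conjugate, the Fenchel--Young equality combined with $\langle a,b^*\rangle\leq\|a\|\|b^*\|_*$ for the explicit form of $J$, and direct scaling of the subgradient inequality for $\partial q_\gamma$. One cosmetic inconsistency: your opening says you start with the explicit description of $J$ and derive the other two claims by scaling, but you actually prove the conjugate formula first and then use it with $\gamma=1$ to obtain that description, which is the order the argument requires.
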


\begin{proof}
Fix $b^*\in X^*$, and from the duality inequality and the scalar Young inequality, we can obtain that 
\begin{align*}
\langle b,b^*\rangle-\frac{\|b\|^2}{2\gamma}
&\leq
\|b\|\|b^*\|_*-\frac{\|b\|^2}{2\gamma}
\\
&\leq
\frac{\gamma}{2}\|b^*\|_*^2
\qquad \forall b\in X,
\end{align*}
and $q_\gamma^*(b^*)\leq\frac{\gamma}{2}\|b^*\|_*^2$. If $b^*\neq0$, then, $\forall \delta>0$, there exists a unit vector $v_\delta\in X$ such that
\begin{equation*}
\langle v_\delta,b^*\rangle>\|b^*\|_*-\delta.
\end{equation*}
Evaluating the supremum in Eq. (\ref{eq:conjugate-definition}) at $b=\gamma\|b^*\|_*v_\delta$, thus we have
\begin{equation*}
q_\gamma^*(b^*)
>\frac{\gamma}{2}\|b^*\|_*^2
-\gamma\delta\|b^*\|_*.
\end{equation*}
Let $\delta\to 0$, we can prove the reverse inequality. The case $b^*=0$ is obvious. Thus, Eq. (\ref{eq:quadratic-conjugate}) holds without requiring norm attainment.  

Next, let $a\in X$. By the subgradient inequality, we infer
\begin{align*}
b^*\in\partial q_\gamma(a)
&\Longleftrightarrow
q_\gamma(b)\geq q_\gamma(a)+\langle b-a,b^*\rangle
\quad \forall b\in X\\
&\Longleftrightarrow
\gamma b^*\in J(a),
\end{align*}
which proves Eq. (\ref{eq:scaled-subdifferential}). Finally, from Fenchel-Young equality, we know that 
\begin{equation*}
b^*\in J(a)
\Longleftrightarrow
\frac{1}{2}\|a\|^2+\frac{1}{2}\|b^*\|_*^2
=\langle a,b^*\rangle.
\end{equation*}
Equality holds precisely when $\|a\|=\|b^*\|_*$ and $\langle a,b^*\rangle=\|a\|\|b^*\|_*$, which is equivalent to Eq. (\ref{eq:duality-mapping-explicit}).
\end{proof}

Thus, based on the above results, we can now characterize the graph points at which the residual vanishes, as follows. 

\begin{corollary}[Vanishing residual]
\label{cor:certificate}
Let $A:X\rightrightarrows X^*$ be an operator. Fix $(x,u^*)\in X\times X^*$, a positive number $\gamma$, and a graph point $(y,y^*)\in\gra A$. The following statements are equivalent:
\begin{align}
&\mathcal R_\gamma(y-x,y^*-u^*)=0;
\label{eq:certificate-residual}\\
&\gamma(u^*-y^*)\in J(y-x);
\label{eq:certificate-duality}\\
&h_A(x,u^*;y,y^*)
=\frac{1}{2}D_\gamma^2((x,u^*),(y,y^*)).
\label{eq:certificate-local}
\end{align}
\end{corollary}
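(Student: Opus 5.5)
The plan is to prove the cycle of equivalences by passing everything through the residual $\mathcal R_\gamma(y-x,y^*-u^*)$. Proposition~\ref{prop:decomposition} handles one link and Lemma~\ref{lem:quadratic-conjugacy} handles the other. Throughout, set $a=y-x$ and $a^*=y^*-u^*$.

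For the equivalence of \eqref{eq:certificate-residual} and \eqref{eq:certificate-local}, I will invoke Proposition~\ref{prop:decomposition} directly. It gives
\begin{equation*}
h_A(x,u^*;y,y^*)=\frac{1}{2}D_\gamma^2((x,u^*),(y,y^*))-\mathcal R_\gamma(a,a^*).
\end{equation*}
So \eqref{eq:certificate-local} holds exactly when $\mathcal R_\gamma(a,a^*)=0$. No monotonicity is needed here, which is consistent with $A$ being an arbitrary operator.

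For the equivalence of \eqref{eq:certificate-residual} and \eqref{eq:certificate-duality}, I will read the residual as a Fenchel--Young gap. By Lemma~\ref{lem:quadratic-conjugacy}, $q_\gamma^*(b^*)=\frac{\gamma}{2}\|b^*\|_*^2$, and therefore
\begin{equation*}
\mathcal R_\gamma(a,a^*)=q_\gamma(a)+q_\gamma^*(-a^*)-\langle a,-a^*\rangle .
\end{equation*}
The Fenchel--Young equality characterization of the subdifferential states that this gap vanishes if and only if $-a^*\in\partial q_\gamma(a)$. By \eqref{eq:scaled-subdifferential}, that condition reads $-\gamma a^*\in J(a)$, that is, $\gamma(u^*-y^*)\in J(y-x)$, which is \eqref{eq:certificate-duality}.

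The equality case of Fenchel--Young needs a short verification rather than a bare citation. The direction $\Leftarrow$ follows from the subgradient inequality combined with the definition of the conjugate. For $\Rightarrow$, gap zero means the supremum defining $q_\gamma^*(-a^*)$ is attained at $a$, which is exactly the subgradient inequality.

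As an alternative that avoids conjugacy, I could use the explicit form \eqref{eq:duality-mapping-explicit}. The computation \eqref{eq:residual-nonnegative} shows that $\mathcal R_\gamma(a,a^*)=0$ forces two equalities: $\langle a,a^*\rangle=-\|a\|\|a^*\|_*$ and $\|a\|=\gamma\|a^*\|_*$. These say precisely that $b^*=-\gamma a^*$ satisfies $\langle a,b^*\rangle=\|a\|^2=\|b^*\|_*^2$, and the converse is a direct substitution. I expect the only delicate point to be careful sign bookkeeping ($a^*$ versus $-a^*$ and the factor $\gamma$); there is no genuine obstacle.
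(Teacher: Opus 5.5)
Your proposal is correct and follows essentially the same route as the paper: Proposition~\ref{prop:decomposition} links the residual to the local Haraux contribution, and the Fenchel--Young equality for a quadratic identifies a vanishing residual with membership in $J(y-x)$. The only cosmetic difference is that the paper multiplies by $\gamma$ and works with $q$ and $b^*=\gamma(u^*-y^*)$, whereas you keep $q_\gamma$ and use $\partial q_\gamma=\frac{1}{\gamma}J$; your signs and factors of $\gamma$ check out.
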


\begin{proof}
Set $a=y-x$ and $b^*=\gamma(u^*-y^*)$, then
\begin{align*}
\gamma\mathcal R_\gamma(y-x,y^*-u^*)
&=\frac{1}{2}\|a\|^2+\frac{1}{2}\|b^*\|_*^2-\langle a,b^*\rangle.
\end{align*}

By Eq.~(\ref{eq:quadratic-conjugate}) with $\gamma=1$ and the Fenchel-Young equality, we infer 
\begin{align*}
\frac{1}{2}\|a\|^2
+\frac{1}{2}\|b^*\|_*^2
-\langle a,b^*\rangle
=0
&\Longleftrightarrow
b^*\in\partial q(a)\\
&\Longleftrightarrow
b^*\in J(a).
\end{align*}
Therefore, we know that $\mathcal R_\gamma(y-x,y^*-u^*)=0\Longleftrightarrow\gamma(u^*-y^*)\in J(y-x)$, which proves the equivalence of Eq.~(\ref{eq:certificate-residual}) and Eq.~ (\ref{eq:certificate-duality}).

Moreover, from Proposition~\ref{prop:decomposition}, we have 
\begin{center}
$h_A(x,u^*;y,y^*)
+\mathcal R_\gamma(y-x,y^*-u^*)
=
\frac{1}{2}
D_\gamma^2((x,u^*),(y,y^*)).$
\end{center}
Consequently, we have 
\begin{center}
$
\mathcal R_\gamma(y-x,y^*-u^*)=0
\Longleftrightarrow
h_A(x,u^*;y,y^*)
=
\frac{1}{2}
D_\gamma^2((x,u^*),(y,y^*)),
$
\end{center}
which proves the equivalence with Eq.~(\ref{eq:certificate-local}).
\end{proof}

Corollary~\ref{cor:certificate} distinguishes exact residual vanishing from the approximate residual condition used in Theorem~\ref{thm:main}. The approximate condition only requires the residual infimum to be zero, whereas exact residual vanishing requires this infimum to be attained at a graph point.

\subsection{Exact attainment and residual approximation}

Next, we study whether the vanishing of the residual infimum implies its attainment. To this end, we consider the set of graph points at which the residual vanishes as follows.


\begin{definition}[Zero set of the residual]
\label{def:certificate-set}
Let $A:X\rightrightarrows X^*$ be an operator with nonempty graph. For every $(x,u^*)\in X\times X^*$ and every $\gamma>0$, we define
\begin{equation}
\mathcal C_{A,\gamma}(x,u^*)
=
\left\{
(y,y^*)\in\gra A
\;\middle|\;
\mathcal R_\gamma(y-x,y^*-u^*)=0
\right\}.
\label{eq:certificate-set}
\end{equation}
\end{definition}

Since $y^*\in Ay$, the condition in Eq. (\ref{eq:certificate-duality}) can be written as
\begin{equation}
0\in\gamma(A-u^*)y+J(y-x),
\label{eq:metricresolvent-inclusion}
\end{equation}
Eq.~(\ref{eq:metricresolvent-inclusion}) is the metric resolvent inclusion used in the proof of the result established in reflexive Banach spaces \cite{combettesmayrand2026}. Solving this inclusion is equivalent to finding a graph point at which the residual vanishes, but the proof of Theorem~\ref{thm:main} only requires graph points with arbitrarily small residual.  This motivates the following enlarged duality mapping, which describes approximate residual vanishing.  

\begin{definition}[Enlarged duality mapping]
\label{def:enlarged-duality}
For $\eta\geq0$ and $a\in X$, define
\begin{equation}
J_\eta(a)
=\left\{
b^*\in X^*\;\middle|\;
\frac{1}{2}\|a\|^2+\frac{1}{2}\|b^*\|_*^2
\leq\langle a,b^*\rangle+\eta
\right\}.
\label{eq:enlarged-duality}
\end{equation}
This is the enlarged duality mapping used in perturbation theory for maximally monotone operators on nonreflexive spaces \cite{marquesalvessvaiter2011}.
\end{definition}

With this enlargement, a small residual is equivalent to the following duality-mapping condition.

\begin{lemma}[Small residual]
\label{lem:approximate-certificate}
Let $(x,u^*)\in X\times X^*$, let $\gamma>0$, and fix $(y,y^*)\in X\times X^*$. Then, $\forall \varepsilon\geq0$,
\begin{equation}
\mathcal R_\gamma(y-x,y^*-u^*)\leq\varepsilon
\quad\Longleftrightarrow\quad
\gamma(u^*-y^*)\in J_{\gamma\varepsilon}(y-x).
\label{eq:approximate-certificate}
\end{equation}
\end{lemma}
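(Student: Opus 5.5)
This is a direct algebraic rescaling, following the substitution already used in the proof of Corollary~\ref{cor:certificate}. I will set $a=y-x$ and $b^*=\gamma(u^*-y^*)$, and express $\gamma\mathcal R_\gamma(y-x,y^*-u^*)$ in terms of $a$ and $b^*$.

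By Eq.~(\ref{eq:residual-formal}),
\begin{equation*}
\gamma\mathcal R_\gamma(y-x,y^*-u^*)
=\frac{1}{2}\|y-x\|^2+\frac{\gamma^2}{2}\|y^*-u^*\|_*^2+\gamma\langle y-x,y^*-u^*\rangle .
\end{equation*}
Since $\|b^*\|_*=\gamma\|y^*-u^*\|_*$ and $\langle a,b^*\rangle=-\gamma\langle y-x,y^*-u^*\rangle$, this becomes
\begin{equation*}
\gamma\mathcal R_\gamma(y-x,y^*-u^*)=\frac{1}{2}\|a\|^2+\frac{1}{2}\|b^*\|_*^2-\langle a,b^*\rangle .
\end{equation*}

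Because $\gamma>0$, multiplying by $\gamma$ preserves inequalities. Hence $\mathcal R_\gamma(y-x,y^*-u^*)\leq\varepsilon$ holds if and only if $\frac{1}{2}\|a\|^2+\frac{1}{2}\|b^*\|_*^2\leq\langle a,b^*\rangle+\gamma\varepsilon$. By Definition~\ref{def:enlarged-duality} with $\eta=\gamma\varepsilon\geq0$, this is exactly $b^*\in J_{\gamma\varepsilon}(a)$, that is, $\gamma(u^*-y^*)\in J_{\gamma\varepsilon}(y-x)$, which is Eq.~(\ref{eq:approximate-certificate}).

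The only point needing care is the sign in the pairing. Writing $b^*$ as $\gamma(u^*-y^*)$ rather than $\gamma(y^*-u^*)$ is what turns $+\langle a,a^*\rangle$ into $-\langle a,b^*\rangle$, matching the form in Eq.~(\ref{eq:enlarged-duality}). Graph membership of $(y,y^*)$ is never used, which is consistent with the statement allowing any $(y,y^*)\in X\times X^*$. Taking $\varepsilon=0$ recovers the equivalence of Eq.~(\ref{eq:certificate-residual}) and Eq.~(\ref{eq:certificate-duality}), since $J_0=J$ by Lemma~\ref{lem:quadratic-conjugacy}; this serves as a consistency check.
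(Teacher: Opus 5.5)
Your proposal is correct and follows the paper's proof essentially verbatim: the same substitution $a=y-x$, $b^*=\gamma(u^*-y^*)$ turns $\gamma\mathcal R_\gamma(y-x,y^*-u^*)$ into $\frac{1}{2}\|a\|^2+\frac{1}{2}\|b^*\|_*^2-\langle a,b^*\rangle$, and since $\gamma>0$ the condition $\mathcal R_\gamma\leq\varepsilon$ becomes $b^*\in J_{\gamma\varepsilon}(a)$ directly from the definition of the enlarged duality mapping. Your explicit sign check on the pairing and your remark that graph membership is never used are welcome additions that the paper leaves implicit.
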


\begin{proof}
Set $a=y-x$ and $b^*=\gamma(u^*-y^*)$. Then, we infer 
\begin{equation*}
\gamma\mathcal R_\gamma(y-x,y^*-u^*)
=\frac{1}{2}\|a\|^2
+\frac{1}{2}\|b^*\|_*^2
-\langle a,b^*\rangle.
\end{equation*}
By Eq. (\ref{eq:enlarged-duality}), the displayed quantity is at most $\gamma\varepsilon$ if and only if $b^*\in J_{\gamma\varepsilon}(a)$, which is Eq. (\ref{eq:approximate-certificate}).
\end{proof}

Thus, by utilizing the equivalence in Lemma~\ref{lem:approximate-certificate}, we can distinguish approximate residual vanishing from exact residual attainment. Moreover,  we also provide the following example showing that a vanishing residual infimum does not necessarily imply exact residual attainment, even for the zero operator on $c_0$.

\begin{example}[Nonattainment on $c_0$]
\label{ex:nonattainment}
Let $X=c_0$ be the Banach space of real sequences converging to zero, identify $X^*$ with $\ell^1$, and take $A=0=\partial 0$. The operator $A$ is maximally monotone and quasidense because it is the subdifferential of a proper lower semicontinuous convex function \cite{simons2016}. Thus, by the equivalence between quasidensity and type~(NI), $A$ is of type~(NI) \cite{simons2020}. We set $x=0$, $\gamma=1$, and
\begin{equation}
u^*=(2^{-1},2^{-2},2^{-3},\ldots)\in\ell^1.
\label{eq:c0-dual-point}
\end{equation}
Then $\|u^*\|_1=1$. If the residual vanished at a graph point, Corollary~\ref{cor:certificate} would give a point $z\in c_0$ such that $u^*\in J(z)$. By Eq. (\ref{eq:duality-mapping-explicit}), this would imply
\begin{equation}
\|z\|_\infty=1
\quad\text{and}\quad
\sum_{n=1}^{\infty}2^{-n}z_n=1.
\label{eq:c0-exact-conditions}
\end{equation}
Since $2^{-n}>0$ and $|z_n|\leq1$, Eq. (\ref{eq:c0-exact-conditions}) implies that $z_n=1$ for $\forall n\in\mathbb N$. The constant sequence $(1,1,\ldots)$ does not belong to $c_0$, thus the residual cannot vanish at a graph point.

For each positive integer $N$, we define
\begin{equation}
z^{(N)}=(\underbrace{1,\ldots,1}_{N\text{ entries}},0,0,\ldots).
\label{eq:c0-approximate-vector}
\end{equation}
Then $z^{(N)}\in c_0$, $\|z^{(N)}\|_\infty=1$, and
\begin{equation}
\langle z^{(N)},u^*\rangle=1-2^{-N}.
\label{eq:c0-pairing}
\end{equation}
Substituting Eq. (\ref{eq:c0-pairing}) gives
\begin{equation}
\mathcal R_1(z^{(N)},-u^*)
=\frac{1}{2}\|z^{(N)}\|_\infty^2
+\frac{1}{2}\|u^*\|_1^2
-\langle z^{(N)},u^*\rangle
=2^{-N}.
\label{eq:c0-residual}
\end{equation}
Thus, $\delta_{A,1}(0,u^*)=0$, but this infimum is not attained on $\gra A$. Given $ \varepsilon>0$, one can choose $N$ such that $2^{-N}\leq\varepsilon$. 
For such an $N$, Lemma~\ref{lem:approximate-certificate} and Eq.~(\ref{eq:c0-residual}) give 
\begin{equation*}
u^*\in J_{2^{-N}}(z^{(N)})\subset J_{\varepsilon}(z^{(N)}),
\end{equation*}
where $H_A(0,u^*)=+\infty$, thus this example is used only to show nonattainment.
\end{example}

The preceding example shows that exact attainment and vanishing of the residual infimum are distinct conditions. The following remark records their logical relation. 

\begin{remark}[Attainment and approximation]
\label{rem:certificate-hierarchy}
Let $A:X\rightrightarrows X^*$ be an operator with nonempty graph, let $(x,u^*)\in X\times X^*$, and let $\gamma>0$. Since the residual is nonnegative, Definitions~\ref{def:weighted-defect} and~\ref{def:certificate-set} give the first equivalence and the first implication below, while Corollary~\ref{cor:defect-form} gives the last implication as follows. 
\begin{equation}
\begin{aligned}
\mathcal C_{A,\gamma}(x,u^*)\neq\emptyset
&\quad\Longleftrightarrow\quad
\min_{(y,y^*)\in\gra A}
\mathcal R_\gamma(y-x,y^*-u^*)=0\\
&\quad\Longrightarrow\quad
\delta_{A,\gamma}(x,u^*)=0\\
&\quad\Longrightarrow\quad
H_A(x,u^*)
\geq\frac{1}{2}d_{\gra A,\gamma}^2(x,u^*).
\end{aligned}
\label{eq:certificate-hierarchy}
\end{equation}
In Example~\ref{ex:nonattainment}, $\delta_{A,1}(0,u^*)=0$ and $\mathcal C_{A,1}(0,u^*)=\emptyset$. Thus, a zero residual infimum indeed does not imply the existence of a graph point at which the residual vanishes.

Furthermore, in the reflexive setting of \cite{combettesmayrand2026}, metric resolvent theory yields a graph point at which the residual vanishes, whereas for maximally monotone operators of type~(NI) on an arbitrary Banach space, Corollary~\ref{cor:defect-form} gives only the vanishing of the residual infimum, and Theorem~\ref{thm:main} shows that this is sufficient for the sharp lower bound. These results show that the sharp estimate itself does not require reflexivity. 
\end{remark}

\subsection{A Fenchel-Young consequence}

The main theorem also gives a direct consequence for convex functions. For $\varphi\in\Gamma_0(X)$, define the Fenchel-Young gap by
\begin{equation}
L_\varphi(x,u^*)
=
\varphi(x)+\varphi^*(u^*)-\langle x,u^*\rangle.
\label{eq:fenchel-young-gap-function}
\end{equation}
The standard comparison
\begin{equation}
L_\varphi(x,u^*)
\geq
H_{\partial\varphi}(x,u^*)
\label{eq:fenchel-young-haraux}
\end{equation}
is recorded in \cite{combettesmayrand2026}. 

Combining Eq. (\ref{eq:fenchel-young-haraux}) with Theorem~\ref{thm:main} gives the graph-distance estimate for the Fenchel-Young gap as follows.  

\begin{corollary}[A bound for the Fenchel-Young gap in terms of distance to the graph]
\label{cor:fenchel-young-distance}
Let $\varphi\in\Gamma_0(X)$. Then, for every $(x,u^*)\in X\times X^*$ and every $\gamma>0$,
\begin{equation}
\varphi(x)+\varphi^*(u^*)-\langle x,u^*\rangle
\geq
\frac{1}{2}d_{\gra\partial\varphi,\gamma}^2(x,u^*).
\label{eq:fenchel-young-distance}
\end{equation}
\end{corollary}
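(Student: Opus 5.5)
The plan is to combine the comparison \eqref{eq:fenchel-young-haraux} with Theorem~\ref{thm:main} applied to $A=\partial\varphi$. To do this, I first need to check that $\partial\varphi$ meets the hypotheses of Theorem~\ref{thm:main} on an arbitrary real Banach space. Three facts are required:
\begin{enumerate}
\item[(a)] $\gra\partial\varphi\neq\emptyset$. This holds by the Br{\o}ndsted--Rockafellar theorem, since $\varphi\in\Gamma_0(X)$ is proper and lower semicontinuous.
\item[(b)] $\partial\varphi$ is maximally monotone. This is Rockafellar's theorem, valid in every real Banach space.
\item[(c)] $\partial\varphi$ is of type~(NI). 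I would obtain this as in Example~\ref{ex:nonattainment}: subdifferentials of functions in $\Gamma_0(X)$ are quasidense \cite{simons2016}, and for maximally monotone operators quasidensity is equivalent to type~(NI) \cite{simons2020}. Alternatively, one can cite the classical fact that subdifferentials are of Gossez type~(D), which coincides with type~(NI) \cite{marquesalvessvaiter2010}.
\end{enumerate}

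With these facts in hand, I fix $(x,u^*)\in X\times X^*$ and $\gamma>0$. If $x\notin\operatorname{dom}\varphi$ or $u^*\notin\operatorname{dom}\varphi^*$, then the left-hand side of \eqref{eq:fenchel-young-distance} equals $+\infty$. Here properness guarantees that $\varphi^*>-\infty$, so the expression is well defined and no $\infty-\infty$ occurs, and the inequality is trivial. Otherwise the left-hand side is finite, and the chain
\[
L_\varphi(x,u^*)\geq H_{\partial\varphi}(x,u^*)\geq \tfrac12 d^2_{\gra\partial\varphi,\gamma}(x,u^*)
\]
follows from \eqref{eq:fenchel-young-haraux} and Theorem~\ref{thm:main}.

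If I want the argument to be self-contained rather than quoting \eqref{eq:fenchel-young-haraux}, I can verify it directly. For $(y,y^*)\in\gra\partial\varphi$, the Fenchel--Young equality gives $\varphi(y)+\varphi^*(y^*)=\langle y,y^*\rangle$. The Fenchel--Young inequality applied at the pairs $(x,y^*)$ and $(y,u^*)$ gives $\varphi(x)\geq\langle x,y^*\rangle-\varphi^*(y^*)$ and $\varphi^*(u^*)\geq\langle y,u^*\rangle-\varphi(y)$. Adding these two inequalities and using the equality yields
\[
L_\varphi(x,u^*)\geq \langle x,y^*\rangle+\langle y,u^*\rangle-\langle y,y^*\rangle-\langle x,u^*\rangle=\langle x-y,y^*-u^*\rangle .
\]
Taking the supremum over $(y,y^*)\in\gra\partial\varphi$ gives \eqref{eq:fenchel-young-haraux}.

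The main obstacle is fact (c), namely that $\partial\varphi$ is of type~(NI) without any reflexivity assumption. This is a genuinely nontrivial theorem (Gossez, or Simons via quasidensity), so I will cite it rather than reprove it. The remaining steps are bookkeeping.
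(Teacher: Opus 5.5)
Your proposal is correct and follows the paper's proof: show that $\partial\varphi$ is maximally monotone of type~(NI) via quasidensity \cite{simons2016,simons2020}, then chain \eqref{eq:fenchel-young-haraux} with Theorem~\ref{thm:main}. Your additional bookkeeping is sound but goes beyond what the paper needs; this covers the nonempty graph, the $+\infty$ case, and the direct Fenchel--Young derivation of \eqref{eq:fenchel-young-haraux}, which the paper only cites.
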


\begin{proof}
The subdifferential $\partial\varphi$ is maximally monotone and quasidense \cite{simons2016}. Thus, by the equivalence between quasidensity and type~(NI), it is of type~(NI) \cite{simons2020}. Combining Eq.~(\ref{eq:fenchel-young-haraux}) with Theorem~\ref{thm:main}, we obtain 
\begin{equation*}
L_\varphi(x,u^*)
\geq H_{\partial\varphi}(x,u^*)
\geq\frac{1}{2}d_{\gra\partial\varphi,\gamma}^2(x,u^*),
\end{equation*}
which proves Eq. (\ref{eq:fenchel-young-distance}). 
\end{proof}

\section{Conclusion}
\label{sec:conclusion}

In this paper, we proved that the sharp lower bound with constant $\frac{1}{2}$ holds for every maximally monotone operator of type~(NI) on an arbitrary real Banach space, and that $\frac{1}{2}$ remains the best uniform constant. The exact pointwise decomposition yields the sharp bound from the vanishing of the residual infimum, without requiring a graph point at which the residual vanishes. The decomposition also yields a lower bound involving the residual infimum for every operator with a nonempty graph, and for maximally monotone operators it yields a new characterization of type~(NI) in terms of the Haraux function.
The example on $c_0$ shows that exact residual attainment is strictly stronger than the vanishing of the residual infimum required for the sharp bound.





\bibliographystyle{IEEEtran}
\bibliography{refer}

\end{document}